\newtheorem{theo}{Theorem}[section]
\newtheorem{prop}{Proposition}[section]
\newcommand{\cA}{{\mathcal A}}
\newcommand\R{{\mathbb R}}
\newcommand\E{{\mathbb E}}
\newcommand\N{{\mathbb N}}
\newcommand\vare{{\varepsilon}}
\definecolor{newgreen}{rgb}{0,0.6,0.3}
\newcommand{\h}{h}
\newcommand{\vh}{v_{\h}}
\newcommand{\uh}{u_\h}
\newcommand{\uhe}{u_\h^{(\vare)}}
\newcommand{\Easx}{\mathbb{E}_{s,x}^a}
\newcommand{\Ealsx}{\mathbb{E}_{s,x}^\alpha}
\newcommand{\fa}{f_a}
\newcommand{\La}{L_a}
\newcommand{\fat}{f_{\alpha_t}}
\newcommand{\Lat}{L_{\alpha_t}}
\newcommand{\fae}{f_a}
\newcommand{\Lae}{L_a}
\newcommand{\dt}{\, \mathrm{d}t}
\newcommand{\dr}{\, \mathrm{d}r}
\newcommand{\twoint}{\int_0^\h \int_0^t}
\newcommand{\beqn}{\begin{equation}}
\newcommand{\eeqn}{  \end{equation}}
\newcommand{\beqno}{\begin{equation*}}
\newcommand{\eeqno}{  \end{equation*}}
\newcommand{\be}{\begin{eqnarray}}
\newcommand{\ee}{  \end{eqnarray}}
\newcommand{\beno}{\begin{eqnarray*}}
\newcommand{\eeno}{  \end{eqnarray*}}
\numberwithin{equation}{section}
\begin{document}

\author[Espen R. Jakobsen]{Espen R. Jakobsen}
\address{Department of Mathematical Sciences, Norwegian University of Science and Technology, 7491 Trondheim, N}
\email{erj@math.ntnu.no}
\author[Athena Picarelli]{Athena Picarelli}
\address{Department of Economics, University of Verona, via Cantarane 24, 37129 Verona, I}
\email{athena.picarelli@univr.it}
\author[Christoph Reisinger]{Christoph Reisinger}
\address{Mathematical Institute, University of Oxford, Andrew Wiles Building, OX2 6GG, Oxford, UK}
\email{christoph.reisinger@maths.ox.ac.uk}

\title[Order 1/4 convergence of piecewise policies]{Improved order 1/4
  convergence for piecewise constant policy approximation of
  stochastic control problems}
\maketitle 

\begin{abstract}
In \emph{N.~V.~Krylov, Approximating value functions for controlled degenerate diffusion processes by using piece-wise constant policies,
Electron.\ J.\ Probab., 4(2), 1999}, it is proved under
standard assumptions that the value functions of controlled diffusion processes can be approximated with order 1/6 error by those with controls which are
constant on uniform time intervals. In this note we refine the proof
and show that the provable rate can be improved to 1/4, which is
 optimal in our setting.
Moreover, we demonstrate the improvements this implies for error estimates derived by similar techniques for approximation schemes, bringing these in line with the
best available results from the PDE literature.
\end{abstract}


\section{Introduction}\label{sect:intro}

In this paper we derive improved error estimates for approximations of
value functions of stochastic optimal control problems. 
{ Let $(\Omega,\mathcal F, \{\mathcal F_t\}_{t\geq 0},\mathbb P)$ be a
  complete filtered probability space, $(W_t)_{t\ge 0}$ a
  $p$-dimensional $\{\mathcal F_t\}$-Wiener process on
  $(\Omega,\mathcal F,\mathbb P)$, and $\mathcal A$ the set of
  progressively measurable processes with values in a set
  $A\subseteq\R^m$. For any $\alpha\in\mathcal A$,  $x\in \R^d$, $t\in [0,T]$ (with $T>0$), let
$X_\cdot=X^{\alpha,t,x}_\cdot$ be the (controlled) It\^o diffusion which satisfies
\be\label{eq:SDE}
X_s = x + \int^{s}_0 b_{\alpha_r}(t+r,X_r) \, \mathrm{d} r +
\int^{s}_0 \sigma_{\alpha_r}(t+r,X_r) \, \mathrm{d}
W_r\qquad\text{for}\qquad s\geq t.
\ee
{Here we use the notation $\varphi_a(\cdot,\cdot) =
\varphi(\cdot,\cdot,a)$ for any $a\in A$ and function $\varphi$}.
For a given terminal cost function $g$ {and} running cost
$f$,
the optimal
control problem consists of maximizing over $\alpha\in \mathcal A$ the
expected total cost
\be
\label{def:J}
J^\alpha(t,x):=\E^{\alpha}_{t,x}\big[\int^{T-t}_0 f_{\alpha_r}(t+r,X_r)\,\mathrm{d} r + g(X_{T-t})  \big].
\ee
The indices on the expectation $\E$ indicate that the law of the process depends
on the starting point and control.  
Finally, the value function of the optimal control
problem is defined by
\be\label{eq:value_function}
v(t,x) := \underset{\alpha\in\mathcal A}{\sup}\, J^\alpha(t,x).
\ee
We consider the following set of assumptions:
\begin{itemize}
\item[\bf{(H1)}]
 $A$ is a compact set;
 \item[\bf{(H2)}] $b:[0,T]\times \R^d \times A\to\R^d$ and $\sigma:[0,T]\times \R^d\times A\to \R^{d\times p}$ are continuous functions. For
$\varphi\in \{b,\sigma\}$, there exists $C_0\geq 0$ such that  for every $t,s\in [0,T], x, y \in\R^d, a\in A$:
 \begin{eqnarray*}
 & &|\varphi(t,x,a)-\varphi(s,y,a)|\leq C_0  \left(|x-y| + |t-s|^{1/2}\right) \quad\text{and}\quad |\varphi(t,x,a)|\leq C_0;
 \end{eqnarray*}
\item[\bf{(H3)}] $g:\R^d \to\R$ and $f:[0,T]\times \R^d\times A\to \R$ are continuous functions. There exists $C_1\geq 0$ such that for every $t,s\in [0,T], x, y \in\R^d, a\in A$:
 \begin{eqnarray*}
 & &|g(x)-g(y)|\leq C_1 |x-y|, \\ 
 & &|f(t,x,a)-f(s,y,a)|\leq C_1  \left(|x-y| + |t-s|^{1/2}\right)\quad\text{and}\quad |f(t,x,a)|\leq C_1.
 \end{eqnarray*}
 
\end{itemize}
Observe that under assumptions (H1), (H2), and for any $\alpha\in \cA$,  there exists a unique strong solution of equation \eqref{eq:SDE}.  
For simplicity, we assume data  and coefficients to be Lipschitz continuous in space and $1/2$-H\"older continuous in time,
and have included no discount factor, but it is not difficult to extend
our results to include discounting and a lower  H\"older regularity for  $f$ and $g$.

We aim to estimate the error introduced by approximating the set of
measurable controls $\mathcal A$ by piecewise constant controls.
Let $h>0$
be the discretization parameter and $\mathcal A_\h$
the subset of $\mathcal A$ of processes which are constant in the
intervals $[n\h, (n+1)\h )$ for $n\in \N$.\footnote{Note that in  \cite{Kry99} the length of intervals is $h^2$, however, in absence of further discretisations, we use $h$ for simplicity.}
The value function associated with this restricted set of controls is defined by 
\be\label{eq:value_tau}
v_\h(t,x) := \underset{\alpha\in\mathcal A_\h}{\sup} J^\alpha(t,x).
\ee

Note that the definition of $\vh$ in (\ref{eq:value_tau}) under the ``shifted'' dynamics in (\ref{def:J}) and (\ref{eq:SDE}) implies that the control discretisation is
always centered at $t$. This will be important for establishing a dynamic programming principle. This is not, though, how one would compute $\vh$ in practice, as discussed in the penultimate
paragraph of this section.}

From a probabilistic perspective, it is clear that 0 is a lower bound for $v- v_h$ since $\mathcal A_\h\subseteq \mathcal A$. 
Under our assumptions, an upper bound on $v-v_h$ of order $h^{\frac16}$ is given in \cite{Kry99}.

An indication that the order 1/6 from \cite{Kry99} might be improved is the fact that under the same regularity assumptions as above it is shown in \cite{DJ12}
that a fully discrete semi-Lagrangian scheme applied to the corresponding HJB equation has order 1/4 in the timestep for an Euler approximation.
This scheme does not distinguish between constant or other controls over individual timesteps. It would therefore be somewhat surprising if the 
scheme which employs further approximations was closer to the original problem than the one which only holds the policies constant over timesteps.


A slightly different angle to the problem is provided in \cite{BJ07}, where the authors construct from (\ref{eq:value_tau}) a subsolution to the HJB equation
corresponding to (\ref{eq:value_function})
by a second order local expansion in $t$. This results in an order 1 error bound in the case of smooth solutions, in contrast to 1/2 which would be obtained in the smooth case by the method in \cite{Kry99} (see also Section \ref{subsec:regular} below).
However, in the general non-regular case, the order in \cite{BJ07} is limited by a switching system approximation of order $\vare^{1/3}$ (for a switching cost chosen of order $\vare$), which, combined with an error term
of the regularised system of order $\h/\vare^3$ (for regularisation parameter $\vare$), results in an order $1/10$ error by optimisation of $\vare$.

In this paper, we combine the advantages of both methods to obtain order 1/4.
The reason we can improve the error estimates of Krylov is that we
use a higher order expansion when we derive the truncation error. Our
discussion (see Subsection  \ref{subsec:regular}) also shows that no further improvement can be obtained in
this way: our new proof uses the maximal possible order of the
truncation error.

Piecewise constant policy time stepping has been used in a numerical method for solving Hamilton-Jacobi-Bellman equations in \cite{reisinger2016piecewise},
where the computational advantage comes from the fact that over the time intervals in which the policy is constant, only linear PDEs have to be solved.
This has been extended to mixed optimal stopping and control problems with nonlinear expectations and jumps in \cite{dumitrescu2018approximation}.
A further benefit lies in the inherent parallelism so that the linear problems with different controls can be solved on parallel processors.
A proof of convergence is given in these works using pure viscosity solution arguments, but no rate of convergence is provided.
Early results on this type of approximation can be found in \cite{lions1980formule} and an extension with ``predicted'' controls is proposed in \cite{kossaczky2016modifications}.

In the remainder of this article, we give in Section \ref{sec:main} a proof of the order 1/4 convergence of the piecewise constant policy approximation,
{and deduce the linear convergence in the case of sufficiently regular solutions and data.}
We then outline in Section \ref{sec:fd} the improved orders which can be derived for approximation schemes by similar techniques.

\section{Main result}
\label{sec:main}
{We begin by stating the main result. Throughout the entire section we work under assumptions
(H1)--(H3).
}

\begin{theo}
\label{theo:main}
For any $s \in [0,T]$, $x\in \R^d$, and $\h >0$, we have
\begin{eqnarray}
\label{eqn:main}
0 \ \leq\ v(s,x)-\vh(s,x) \ \leq \ C \h^{1/4},
\end{eqnarray}
{where the constant $C$ only depends on the constants in Assumptions (H2) and (H3).} 
\end{theo}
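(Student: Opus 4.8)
The plan is to compare $v$ and $v_\h$ via a "shaking the coefficients"/mollification argument combined with a sharp truncation-error estimate for a dynamic programming step. The lower bound $v - v_\h \geq 0$ is immediate since $\mathcal A_\h \subseteq \mathcal A$, so everything is about the upper bound. First I would establish the regularity of $v$ and $v_\h$ that follows from (H1)--(H3): both are bounded and Lipschitz in $x$, and $1/2$-Hölder in $t$, with constants depending only on $C_0, C_1$. This uses standard moment estimates for \eqref{eq:SDE} together with Gronwall, and the fact that $\mathcal A_\h$ is stable under the time-shift needed for the dynamic programming principle (this is exactly the point of "centering the discretisation at $t$" flagged after \eqref{eq:value_tau}).

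The heart of the argument is a one-step comparison. For the $\h$-problem, the dynamic programming principle gives, for $\alpha \in \mathcal A_\h$ constant on $[0,\h)$ equal to some $a \in A$,
\[
v_\h(t,x) = \sup_{a \in A} \E^{a}_{t,x}\Big[\int_0^\h f_a(t+r,X_r)\,\mathrm dr + v_\h(t+\h, X_\h)\Big].
\]
I would then regularise $v$: introduce a mollification $v^{(\vare)}$ of $v$ in $(t,x)$ at scale $\vare$ (in $x$) and $\vare^2$ (in $t$), obtained by averaging $v$ against a smooth kernel; by the Lipschitz/Hölder regularity, $\|v - v^{(\vare)}\|_\infty \le C\vare$, while $v^{(\vare)}$ has bounded derivatives with $|\partial_t v^{(\vare)}| + |D^2 v^{(\vare)}| \le C\vare^{-2}$ and $|D^3 v^{(\vare)}| \le C \vare^{-3}$. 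Since $v$ is a (viscosity) supersolution of the HJB equation $-\partial_t v - \sup_a [ L_a v + f_a ] = 0$, and mollification of a supersolution against a probability kernel is again a supersolution (up to the uniform translation error controlled by the modulus of continuity of the coefficients at scale $\vare$ in space and $\vare^2$... but time coefficients are only $1/2$-Hölder, giving error $C\vare$), one gets that $v^{(\vare)}$ is a supersolution with an $O(\vare)$ defect. Now apply Itô's formula to $v^{(\vare)}(t+r, X_r)$ along the optimal (or near-optimal) $\mathcal A_\h$-control over $[0,\h)$: this produces the "truncation error" per step. The key improvement over Krylov is to expand to \emph{third} order: the first-order terms reproduce the generator and are absorbed by the supersolution property; the genuine error comes from $\E[ (v^{(\vare)}(t+r,X_r) - v^{(\vare)}(t+\h,X_\h)) - (\text{frozen increment})]$, and a Taylor expansion to third order in the Itô formula, using the martingale property of the stochastic integrals and $\E|X_r - x|^2 \le Cr$, yields a per-step error of order $\h^2 \cdot \|D^3 v^{(\vare)}\| + \h^2 \|\partial_t D v^{(\vare)}\| + \dots \le C \h^2 \vare^{-3}$, rather than $\h^2\vare^{-2}$ (which is what one would get from only a second-order expansion).

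Summing over the $\lceil (T-t)/\h\rceil$ steps gives, via a discrete Gronwall argument,
\[
v(t,x) - v_\h(t,x) \ \le\ C\vare + C \, \frac{\h^2}{\vare^3} \cdot \frac 1\h \ =\ C\Big(\vare + \frac{\h}{\vare^3}\Big),
\]
and optimising over $\vare$ by choosing $\vare \sim \h^{1/4}$ gives the claimed $C\h^{1/4}$. I would also, for the regular-solution remark in Section~\ref{subsec:regular}, note that when $v$ is smooth one can skip the mollification ($\vare = 0$) and the per-step error is directly $O(\h^2)$, summing to $O(\h)$, which is why $\h^{1/4}$ is the best this method gives in the non-regular case and $\h$ in the regular case.

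The main obstacle I anticipate is making the third-order Itô expansion rigorous while keeping track of the exact powers of $\h$ and $\vare$: one must be careful that cross terms involving $\partial_t v^{(\vare)}$ and mixed derivatives $\partial_t D^2 v^{(\vare)}$, $D^3 v^{(\vare)}$, and the time increments $|t+r - (t+\h)| \le \h$ together with $\E|X_r - x|^{k}$ combine to give exactly $\h^2\vare^{-3}$ and no worse; in particular the $1/2$-Hölder-in-time regularity of $b,\sigma,f$ must be shown not to degrade the rate (it only enters the $O(\vare)$ mollification term, not the $\h^2\vare^{-3}$ term). A secondary technical point is justifying the dynamic programming principle for $v_\h$ with the shifted dynamics and the use of viscosity-solution (or a direct probabilistic) arguments for the supersolution property of $v$ and its mollification; this is where the precise definition of $\mathcal A_\h$ in the excerpt is used.
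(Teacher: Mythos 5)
There is a genuine directional gap at the heart of your argument: you propose to mollify the value function $v$ and use the fact that $v^{(\vare)}$ is an approximate \emph{super}solution of the HJB equation, but this gives the trivial inequality. To see why, note that the Bellman operator $\partial_t u + \sup_a[L_a u + f_a]$ is \emph{convex} in $u$, so mollification (an average of translates) of a solution yields a near-\emph{super}solution, not a subsolution. Once you know $L_a v^{(\vare)} + f_a \le C\vare$ for all $a$, an It\^o expansion along any control gives $\E^a\big[\int_0^h f_a\,\mathrm dr + v^{(\vare)}(t+h,X_h)\big] \le v^{(\vare)}(t,x) + O(h\vare + h^2\vare^{-3})$ -- i.e.\ $v^{(\vare)}$ is an approximate supersolution of the one-step DPP that characterizes $v_h$. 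Inducting backwards, this yields $v_h \le v^{(\vare)} + \text{error} \approx v + \text{error}$, which we already know for free since $\mathcal A_h \subseteq \mathcal A$. To prove the non-trivial bound $v - v_h \le Ch^{1/4}$ by your route you would instead need $v^{(\vare)}$ to be an approximate \emph{sub}solution of the DPP (equivalently of HJB), and mollification does not preserve subsolutions of a convex operator. You cannot fix this by ``shaking $v$'' either, since shaking the coefficients of the original problem also produces a supersolution-type inequality after averaging.

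The paper avoids this obstruction by regularising the \emph{other} function: it shakes and mollifies $v_h$, obtaining $u_h^{(\vare)}$. The shaking (Propositions~\ref{prop:vhe}--\ref{pro:supdpp}) is essential, not a side remark: it is what makes the translated copies of $u_h$ satisfy the super-DPP for the \emph{unshifted} coefficients, so that after averaging one still has the super-DPP \eqref{eq:DPP_uhe}, and hence $L_a u_h^{(\vare)} + f_a \le C\vare^{-3}h$ by the double-It\^o expansion. With $u_h^{(\vare)}$ an approximate supersolution, applying It\^o along an \emph{arbitrary} admissible control $\alpha$ and taking the supremum over $\alpha$ gives $\tilde v \le u_h^{(\vare)} + \text{error} \le v_h + \text{error}$, i.e.\ precisely the non-trivial direction. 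In short: to bound $v - v_h$ from above, one must regularise $v_h$ (the function on the right-hand side), because that is the side on which the supersolution property obtained by mollification is useful. Your per-step truncation error $h^2\vare^{-3}$ from the higher-order expansion, and the final optimisation $\vare\sim h^{1/4}$, do coincide with the paper's step 1), but the comparison machinery around it mollifies the wrong object and relies on a one-sided property that points the wrong way. A secondary point: near $s=T$ one also needs a separate estimate (the paper's step 3) because the two-fold It\^o formula requires $s\le T-h$; your plan does not address this boundary layer.
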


A major difficulty in the proof of Theorem \ref{theo:main} is the
fact that typically $v$ and $\vh$ are not smooth. Even in the
non-degenerate case where $v$ is $C^{2+\delta}$, $\vh$ is still not
smooth in general. A simple example is the Black-Scholes-Barenblatt
equation resulting from an uncertain volatility model {(see \cite{lyons1995uncertain})}. 
Here, the control is of bang-bang type and the optimal control problem for piecewise constant policies reduces to taking the maximum of two smooth functions at the end of each time interval, so that for $t$ on the time mesh,
$\vh(t,\cdot)$ will only be Lipschitz (in the spatial argument). 

Since the proof of Theorem \ref{theo:main} relies on repeated use of the It\^o formula, we need to
work with smooth functions, both for the coefficients and value functions
$v$ and $v_h$. This means that we need to introduce several
regularization arguments and use Krylov's  method of shaking the
coefficients. 


\subsection{Background results and regularisation}
\label{subsec:background}

{In this section, we introduce Krylov's regularization and give related preliminary
results.} Some
of the  proofs are given in \cite{Kry99} and not repeated here; 
see also \cite{BJ02,BJ05} for analogous results proved with PDE
arguments.
In order to apply It{\^o}'s formula twice,
$\sigma, b, f,g,v$, and $\vh$ must be regularized.  Let
$\vare>0$ and the mollifier $\rho_\vare$ be defined as 
\begin{align}
\label{eq:rhoxt}
\rho_\vare(t, x) :=
	\frac{1}{\vare^{d+2}}\rho \left( \frac{t}{\vare^2}, \frac{x}{\vare} \right),
\end{align}
 where
\[
\rho \in C^\infty(\R^{d+1}),\quad \rho\geq 0,\quad 
\mathrm{supp}\,\rho = (0,1) \times \{|x| < 1\}, \quad
\int_{\mathrm{supp}\,\rho}\rho(e) \, \mathrm{d} e =1.
\]
For any function $\varphi:[0,T]\times\R^d\to\R$, we define
$\varphi^{(\vare)}\in C^\infty([0,T]\times\R^d)$ to be the mollification of  a suitable extension of $\varphi$ to
$[-\vare^2, T]$ 
$$
\varphi^{(\vare)}(t,x):= (\varphi*\rho_\vare)(t,x) =  \int_{0\leq s\leq \vare^2}\int_{|y|\leq \vare} \varphi(t-s,x-y)\rho_\vare(s,y) \, \mathrm{d}s \, \mathrm{d}y.
$$
{We can always take an extension which preserves
the H\"older continuity in time and Lipschitz continuity in space of $\varphi$. 
Then standard estimates for mollifiers imply that}
\be\label{eq:deriv_coeff}
\|\varphi-\varphi^{(\vare)}\|_\infty\leq C\vare\qquad\text{and}\qquad\left\|\partial^m_t D^{k}_x \varphi^{(\vare)}\right\|_\infty \leq C {\vare^{1-2m-k}}\ \qquad \text{for}\qquad {k+m}\geq 1.
\ee

Let $\tilde X_\cdot$ be the solution of \eqref{eq:SDE} with coefficients
replaced by $b^{(\vare)}$ and $\sigma^{(\vare)}$. Then we denote by
$\tilde v$ and $\tilde J^\alpha$ the solution and cost function of
the optimal control problem \eqref{eq:SDE}--\eqref{eq:value_function}
where $X_\cdot$ 
is replaced by $\tilde X_\cdot$ and $f,g$ by $f^{(\vare)},g^{(\vare)}$.

\begin{prop}\label{tildev}
There exists $C\geq 0$ such that for any $t\in [0,T], x\in\R^d$
$$
|v(t,x)-\tilde v(t,x)|\leq C\vare.
$$
\end{prop}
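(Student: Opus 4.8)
The plan is to reduce the estimate on the value functions to a uniform-in-control estimate on the cost functionals, and then to a standard stability estimate for the SDE. Since $v=\sup_{\alpha\in\cA}J^\alpha$ and $\tilde v=\sup_{\alpha\in\cA}\tilde J^\alpha$ are suprema over the \emph{same} control set $\cA$, the elementary inequality $|\sup_i a_i-\sup_i b_i|\le\sup_i|a_i-b_i|$ gives $|v(t,x)-\tilde v(t,x)|\le\sup_{\alpha\in\cA}|J^\alpha(t,x)-\tilde J^\alpha(t,x)|$, so it is enough to bound $|J^\alpha(t,x)-\tilde J^\alpha(t,x)|$ by $C\vare$ with $C$ independent of $\alpha$, $t$, $x$.

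Next I would fix $\alpha\in\cA$, let $X_\cdot=X^{\alpha,t,x}_\cdot$ solve \eqref{eq:SDE}, and let $\tilde X_\cdot$ solve the same equation with $b,\sigma$ replaced by $b^{(\vare)},\sigma^{(\vare)}$ (which is well posed, since $b^{(\vare)},\sigma^{(\vare)}$ are bounded and Lipschitz in $x$ uniformly in the remaining variables), the two processes being coupled through the same Wiener process $W$. Subtracting the two integral equations and writing each increment in the split form
\[
b_{\alpha_r}(t+r,X_r)-b^{(\vare)}_{\alpha_r}(t+r,\tilde X_r)=\big(b_{\alpha_r}(t+r,X_r)-b_{\alpha_r}(t+r,\tilde X_r)\big)+\big(b_{\alpha_r}(t+r,\tilde X_r)-b^{(\vare)}_{\alpha_r}(t+r,\tilde X_r)\big),
\]
and similarly for $\sigma$, the first bracket is bounded by $C_0|X_r-\tilde X_r|$ by (H2) and the second by $\|b-b^{(\vare)}\|_\infty\le C\vare$ by \eqref{eq:deriv_coeff}. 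Applying the Burkholder--Davis--Gundy inequality to the stochastic integral, Cauchy--Schwarz to the drift part, and then Gronwall's lemma, one gets $\E\big[\sup_{0\le s\le T-t}|X_s-\tilde X_s|^2\big]\le C\vare^2$, with $C$ depending only on $T,C_0,d,p$; in particular $\E\big[|X_s-\tilde X_s|\big]\le C\vare$ for all $s\le T-t$.

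Finally, in the representation
\[
J^\alpha(t,x)-\tilde J^\alpha(t,x)=\E\Big[\int_0^{T-t}\!\big(f_{\alpha_r}(t+r,X_r)-f^{(\vare)}_{\alpha_r}(t+r,\tilde X_r)\big)\dr+g(X_{T-t})-g^{(\vare)}(\tilde X_{T-t})\Big],
\]
I would split each difference the same way, using $|f_{\alpha_r}(t+r,X_r)-f^{(\vare)}_{\alpha_r}(t+r,\tilde X_r)|\le C_1|X_r-\tilde X_r|+\|f-f^{(\vare)}\|_\infty$ — here one uses that mollification does not increase the spatial Lipschitz constant — and likewise for $g$, together with $\|f-f^{(\vare)}\|_\infty+\|g-g^{(\vare)}\|_\infty\le C\vare$ from \eqref{eq:deriv_coeff}. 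Taking expectations and inserting the SDE estimate yields $|J^\alpha(t,x)-\tilde J^\alpha(t,x)|\le C\vare$ uniformly in $\alpha$, and taking the supremum over $\alpha$ completes the proof. There is no genuine obstacle here; the only points needing a little care are that the extensions of $b,\sigma,f,g$ to $[-\vare^2,T]$ be chosen to preserve the time-H\"older and space-Lipschitz bounds of (H2)--(H3) (so that \eqref{eq:deriv_coeff} holds with constants depending only on $C_0,C_1$), and that the constants from Burkholder--Davis--Gundy and Gronwall be uniform in $\alpha$ and in $(t,x)$.
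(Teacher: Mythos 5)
Your proof is correct and follows essentially the same route as the paper: reduce to a uniform-in-$\alpha$ bound on $|J^\alpha-\tilde J^\alpha|$, invoke the standard SDE continuous-dependence estimate $\E\big[\sup_{s\le T-t}|X_s-\tilde X_s|^2\big]\le C\big(\|b-b^{(\vare)}\|_\infty^2+\|\sigma-\sigma^{(\vare)}\|_\infty^2\big)\le C\vare^2$ (you spell out the Burkholder--Davis--Gundy and Gronwall steps where the paper simply cites this as standard), and combine it with the Lipschitz continuity of $f,g$ and the mollification errors $\|f-f^{(\vare)}\|_\infty,\|g-g^{(\vare)}\|_\infty\le C\vare$.
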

\begin{proof}
The result follows from the definitions of $v$ and $\tilde
v$ since by standard  continuous
dependence results for SDEs and Lipschitz and H\"older continuity of
$f,g, b, \sigma$,
$$
\E^{\alpha}_{t,x}\Big[\underset{s\in [0,T-s]}\sup |X_s - \tilde
  X_s|^2\Big]\leq C (\|b-b^{(\vare)}\|^2_\infty +
\|\sigma-\sigma^{(\vare)}\|^2_\infty)\leq C\vare^2
$$
for some constant $C$ independent of the control $\alpha$.
\end{proof}
To avoid heavy notation, we will use $(f,g,b,\sigma)$ instead of $(f^{(\vare)},g^{(\vare)},b^{(\vare)},\sigma^{(\vare)})$ in the rest of the paper,
keeping in mind estimates \eqref{eq:deriv_coeff} for their derivatives.  
We now proceed with the regularisation of the value function $v_\h$.
Let $\mathcal E_\h$
be the set of progressively measurable processes 
$e\equiv(e_1,e_2)$ with values in  $(-\vare^2,0)\times B_\vare(0)$ (where $B_\vare(0)$ denotes the ball of radius $\vare$ in $\R^d$) which are constant in each time interval $[n\h,(n+1)\h )$.
Letting $S = T+\vare^2$, we define for any $s\in [0,S], x\in \R^d$  the following ``perturbed'' value function 
\be\label{eq:veps}
\uh(s,x):=\underset{\alpha\in\mathcal A_{\h}, e\in\mathcal E_\h}\sup
\mathbb E^{(\alpha, e)}_{s,x} \Big[ \int^{S-s}_0
  f_{\alpha_r}(s+r,\hat X_r) \,\mathrm{d} r + g(\hat X_{S-s}) \Big],
\ee
where $\hat X_\cdot = \hat X^{(\alpha,e),s,x}_\cdot$ is the solution of the
following SDE with (mollified and) ``shaken coefficients'':
\begin{align}\label{SDE shake}
\hat X_\cdot = x +\int^{\cdot}_0 b_{\alpha_r}(s + r + e_{1,r},\hat  X_r + e_{2,r}) \,\mathrm{d}r + \int^{\cdot}_0 \sigma_{\alpha_r}(s + r + e_{1,r},\hat  X_r + e_{2,r}) \, \mathrm{d}W_r.
\end{align}

\begin{prop}\label{prop:vhe}
There exists  a constant $C\geq 0$ such that
$$
|\vh (t,x)-\uh(t,x)|\leq C \vare
$$
for any $t\in [0,T], x\in \R^d$, and
$$
|\uh (t,x)- \uh(s,y)|\leq C (|x-y| + |t-s|^{1/2})
$$
for any $t,s\in [0,S]$ and $x,y\in\R^d$. Moreover, for any $s\in [0,S-\h]$,  $\uh$ satisfies the following dynamic programming principle (DPP):
\be\label{eq:dpp_vhe}
\uh(s,x)=\underset{\substack{a\in A\\ 0\leq \eta\leq \vare^2, |\xi|\leq \vare}}\sup \mathbb E^{(a,(\eta,\xi))}_{s,x} \Big[ \int^{\h}_0 \fa(s+r,\hat X_r)\, \mathrm{d} r + \uh(s+\h,\hat X_{\h})  \Big].
\ee
\end{prop}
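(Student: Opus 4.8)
The plan is to prove the three assertions in the order: the Lipschitz/Hölder regularity of $\uh$, then the bound $|\vh-\uh|\le C\vare$, then the DPP, since the measurable–selection step in the DPP argument will rely on the continuity of $\uh$. Throughout I use that, by the convention adopted just above the statement, both $\vh$ and $\uh$ are built from the \emph{mollified} coefficients, so that the only differences between the two control problems are the coefficient ``shaking'' by $e=(e_1,e_2)$ and the horizon extension from $T$ to $S=T+\vare^2$. All estimates below reduce to Gronwall's inequality, the elementary bound $\E|\hat X_u-\hat X_{u'}|^2\le C|u-u'|$ for $|u-u'|\le T$ (bounded coefficients), and (H2)--(H3).

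For the spatial bound, fix $\delta>0$ and a $\delta$-optimal pair $(\alpha,e)\in\mathcal A_\h\times\mathcal E_\h$ for $\uh(t,x)$, and use the \emph{same} pair in the problem started at $(t,y)$. Since the shaken coefficients $b_{\alpha_r}(t+r+e_{1,r},\,\cdot+e_{2,r})$ and $\sigma_{\alpha_r}(t+r+e_{1,r},\,\cdot+e_{2,r})$ are Lipschitz in the state variable with the same constant $C_0$ uniformly in the perturbation, Gronwall gives $\E\big[\sup_r|\hat X^x_r-\hat X^y_r|^2\big]\le C|x-y|^2$, and Lipschitz continuity of $f,g$ turns this into $|\uh(t,x)-\uh(t,y)|\le C|x-y|$. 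For the temporal bound, take $t\le s$ and transport a $\delta$-optimal pair for $\uh(t,x)$ to the problem started at $s$: the same pair, read in the process's own time, is admissible there because the mesh $\{n\h\}$ does not move, and the two families of coefficients now differ only through a time shift of size $|t-s|$, hence by at most $C_0|t-s|^{1/2}$; continuous dependence yields $\E\big[\sup_r|\hat X^t_r-\hat X^s_r|^2\big]\le C|t-s|$. Combining this with $\E|\hat X_u-\hat X_{u'}|^2\le C|u-u'|$ to absorb the mismatch between the horizons $S-t$ and $S-s$, and with the H\"older regularity of $f,g$, gives $|\uh(t,x)-\uh(s,x)|\le C|t-s|^{1/2}$; the triangle inequality then yields the joint estimate.

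For $|\vh-\uh|\le C\vare$ I compare the two problems control by control. To bound $\uh-\vh$, take a near-optimal $(\alpha,e)$ for $\uh(t,x)$ and feed $\alpha$ with \emph{no} shaking into the $\vh$-problem: since $|e_{1,r}|^{1/2}\le\vare$ and $|e_{2,r}|\le\vare$, the coefficients are perturbed by $O(\vare)$ and Gronwall gives $\E\big[\sup_{r\le T-t}|\hat X_r-X_r|^2\big]\le C\vare^2$, while the extra horizon $[T-t,S-t]$ of length $\vare^2$ contributes $O(\vare^2)$ to the running cost and, via $\E|\hat X_{S-t}-\hat X_{T-t}|^2\le C\vare^2$ and Lipschitz $g$, $O(\vare)$ to the terminal cost; hence $\uh(t,x)\le\vh(t,x)+C\vare$. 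For the reverse inequality, take a near-optimal $\alpha$ for $\vh(t,x)$, pair it with the constant admissible shaking $e\equiv(-\tfrac12\vare^2,0)\in\mathcal E_\h$ (again an $O(\vare)$ perturbation of the coefficients) and extend $\alpha$ along the mesh beyond $T-t$; the same estimates give $\vh(t,x)\le\uh(t,x)+C\vare$.

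Finally, the DPP follows from the two usual inequalities. For ``$\le$'', on $[0,\h)$ any admissible $(\alpha,e)$ is constant, say $(a,(\eta,\xi))$, and the shifted pair $(\alpha_{\cdot+\h},e_{\cdot+\h})$ is again admissible for $\uh(s+\h,\cdot)$ because $\{n\h\}$ is shift-invariant; conditioning on $\mathcal F_\h$ and using the flow property of \eqref{SDE shake} bounds the post-$\h$ cost by $\uh(s+\h,\hat X_\h)$ a.s., and taking suprema gives ``$\le$''. For ``$\ge$'', fix $(a,(\eta,\xi))$ and $\delta>0$; using the spatial Lipschitz continuity of $\uh$ just established, choose a measurable $\delta$-optimal feedback for the family $\{\uh(s+\h,y)\}_{y\in\R^d}$ (e.g.\ by partitioning $\R^d$ into small cells and selecting a $\delta$-optimal pair on each), concatenate it after the constant piece $(a,(\eta,\xi))$ on $[0,\h)$, and obtain $\uh(s,x)\ge\E[\int_0^\h f_a(s+r,\hat X_r)\dr+\uh(s+\h,\hat X_\h)]-C\delta$; letting $\delta\to0$ and taking the supremum over $(a,(\eta,\xi))$ completes the proof. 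The step I expect to be the main obstacle is precisely this ``$\ge$'' direction, where near-optimal controls for the shifted problem must be glued together measurably; the Lipschitz continuity of $\uh$ makes this routine but it is the one genuinely delicate point, while the remaining difficulty in the $|\vh-\uh|$ estimate is purely bookkeeping of the two perturbations (shaking and horizon extension).
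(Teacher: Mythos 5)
The paper does not actually prove this proposition: it simply cites \cite[Corollary 3.2]{Kry99} for the two stability/regularity inequalities and \cite[Lemma 3.3]{Kry99} for the dynamic programming identity. Your proposal supplies a self-contained argument of the kind Krylov's cited results rest on, and the substance is correct. The two stability bounds reduce, as you say, to control-by-control comparison via Gronwall together with (H2)--(H3) and the increment estimate $\E|\hat X_u-\hat X_{u'}|^2\le C|u-u'|$; your bookkeeping of the horizon mismatch on $[T-s,S-s]$ (contributing $O(\vare^2)$ to the running cost and $O(\vare)$ to the terminal cost) is right, and you correctly spot the small wrinkle that $0\notin(-\vare^2,0)$, so that in the direction $\vh\le\uh+C\vare$ the comparison must use a nonzero admissible $e_1$ of size $O(\vare^2)$ rather than $e\equiv 0$. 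The time-H\"older bound is handled correctly by combining the $|t-s|^{1/2}$-perturbation of the coefficients with a time-increment estimate to absorb the difference between the horizons $S-t$ and $S-s$.

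For the DPP your outline is the standard one and is sound, and you correctly identify where the genuine technical content lies: in the ``$\ge$'' direction, the cell-wise gluing of $\delta$-optimal post-$\h$ controls has to produce an admissible element of $\mathcal A_\h\times\mathcal E_\h$, which requires a measurable-selection argument (and, in a strong formulation, a statement that such concatenations remain progressively measurable for the original filtration); in the ``$\le$'' direction one must justify that the shifted pair $(\alpha_{\cdot+\h},e_{\cdot+\h})$ is admissible for the problem started at $(s+\h,\hat X_\h)$ after conditioning on $\mathcal F_\h$. You flag both points rather than spell them out; a fully rigorous version would need to develop them as in \cite[Lemma 3.3]{Kry99}, which is precisely what the paper delegates by citation. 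So there is no error in your argument, but the one step you leave at the level of a sketch is exactly the step the reference is carrying.
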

\begin{proof}
These are standard results. The first two inequalities can be found
e.g. in \cite[Corollary 3.2]{Kry99}, while \eqref{eq:dpp_vhe} is a consequence of \cite[Lemma 3.3]{Kry99}.
\end{proof}

Following the notation introduced above 
we consider the regularised (mollified) function $\uhe$.

\begin{prop}
\label{pro:supdpp}
The function $\uhe$  belongs to $C^\infty([0,T]\times\R^d)$. There exists a constant $C\geq 0$ such that 
\be\label{eq:uh-uhe}
\big|\uh(t,x)-\uhe(t,x)\big|\leq C\vare
\ee
for $t\in [0,T],x\in\R^d$, and  
\be\label{eq:deriv}
\left\|\partial^m_t D^{k}_x \uhe\right\|_\infty \leq C \vare^{1-2m}\vare^{1-k}
\ee
for any $k, m\geq 1$.
Moreover, $\uhe$ satisfies the following super-dynamic programming principle 
\begin{equation}\label{eq:DPP_uhe}
\uhe(t,x)\geq  E^{a}_{t,x} \Big[\int^{\h}_0 \fa(t+r,\tilde X_r)
  \,\mathrm{d} r + \uhe(t+\h,\tilde X_{\h})  \Big]
\end{equation}
for any $a\in A$, $0\leq \eta\leq \vare^2, |\xi|\leq \vare$, $t\in [0,T-h], x\in\R^d$.
\end{prop}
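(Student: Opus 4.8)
The plan is to obtain all four assertions from Proposition~\ref{prop:vhe} by mollifying $\uh$ in $(t,x)$. Since $\uh$ is continuous (Proposition~\ref{prop:vhe}), hence locally integrable, its convolution $\uhe=\uh*\rho_\vare$ with the kernel \eqref{eq:rhoxt} lies in $C^\infty([0,T]\times\R^d)$, once a suitable extension of $\uh$ to $[-\vare^2,T]$ preserving the Lipschitz/H\"older regularity is fixed. The bound \eqref{eq:uh-uhe} is the usual mollification estimate: since $\int\rho_\vare=1$, $\mathrm{supp}\,\rho_\vare\subseteq(0,\vare^2)\times B_\vare(0)$, and $\uh$ is Lipschitz in space and $\tfrac12$-H\"older in time with the constants of Proposition~\ref{prop:vhe},
\[
\big|\uh(t,x)-\uhe(t,x)\big|=\Big|\int\!\big(\uh(t,x)-\uh(t-s,x-y)\big)\rho_\vare(s,y)\, \mathrm{d}s\, \mathrm{d}y\Big|\le C\!\int\!\big(|y|+|s|^{1/2}\big)\rho_\vare(s,y)\, \mathrm{d}s\, \mathrm{d}y\le C\vare .
\]
The derivative bounds \eqref{eq:deriv} follow from the companion estimates for derivatives of mollifications already used in \eqref{eq:deriv_coeff} (vanishing moments of $\partial_t^mD_x^k\rho_\vare$ together with the scaling $\int|\partial_t^mD_x^k\rho_\vare|\le C\vare^{-2m-k}$), using in addition the Lipschitz-in-space regularity of $\uh$ to gain one power of $\vare$ for each spatial derivative.

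The substantive part is the super-DPP \eqref{eq:DPP_uhe}, and here the idea is Krylov's shaking-of-coefficients device: the mollification shift in $(t,x)$ can be absorbed into the shaking parameters of the DPP \eqref{eq:dpp_vhe}. Fix $a\in A$ and $(t,x)$ with $t\in[0,T-\h]$. For each $(s,y)\in\mathrm{supp}\,\rho_\vare$, apply \eqref{eq:dpp_vhe} at the base point $(t-s,x-y)$ with the \emph{specific} choice of shaking parameter $(\eta,\xi)=(s,y)$; this is admissible exactly because the mollification scales ($\vare^2$ in time, $\vare$ in space) are matched to the shaking ranges $[0,\vare^2]$ and $B_\vare(0)$. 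With this choice the shaken SDE \eqref{SDE shake} started at $(t-s,x-y)$ becomes
\[
\hat X_r=(x-y)+\int_0^r b_a(t+u,\hat X_u+y)\, \mathrm{d}u+\int_0^r \sigma_a(t+u,\hat X_u+y)\, \mathrm{d}W_u ,
\]
so that $\tilde X_r:=\hat X_r+y$ solves \eqref{eq:SDE} from $(t,x)$ with the (mollified) coefficients and constant control $a$; crucially, the law of $\tilde X$ does \emph{not} depend on $(s,y)$. Hence \eqref{eq:dpp_vhe} yields, for every such $(s,y)$,
\[
\uh(t-s,x-y)\ \ge\ E^a_{t,x}\Big[\int_0^\h \fa\big(t-s+r,\tilde X_r-y\big)\, \mathrm{d}r+\uh\big(t-s+\h,\tilde X_\h-y\big)\Big].
\]

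It then remains to integrate this inequality against $\rho_\vare(s,y)\, \mathrm{d}s\, \mathrm{d}y$; Fubini applies by the boundedness/linear growth of the integrands from (H2)--(H3) and the compact support of $\rho_\vare$. The left-hand side is precisely $\uhe(t,x)$. On the right-hand side, because $\tilde X_r$ is independent of $(s,y)$, the inner integrals $\int \fa((t+r)-s,\tilde X_r-y)\rho_\vare(s,y)\, \mathrm{d}s\, \mathrm{d}y$ and $\int \uh((t+\h)-s,\tilde X_\h-y)\rho_\vare(s,y)\, \mathrm{d}s\, \mathrm{d}y$ are, by the very definition of mollification, the convolution of $\fa$ with $\rho_\vare$ at $(t+r,\tilde X_r)$---which, by the agreed notational convention, and since it still obeys estimates of the form \eqref{eq:deriv_coeff}, we keep writing as $\fa(t+r,\tilde X_r)$---and $\uhe(t+\h,\tilde X_\h)$. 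This gives \eqref{eq:DPP_uhe}. One must also check that all time arguments stay in the domain of definition: since $t\in[0,T-\h]$, $s\in[0,\vare^2]$, $r\in[0,\h]$ and $S=T+\vare^2$, the points $t-s$, $t-s+\h$, $t+r$, $t+\h$ all lie in $[-\vare^2,S]$, where the (extended) functions $\uh$ and $\uhe$ are defined.

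I expect the main obstacle to be exactly this scale-matching step: recognising that the extra freedom built into the perturbed value function \eqref{eq:veps}--\eqref{SDE shake} was designed precisely so that one mollification ``costs nothing'', i.e.\ so that the diffusion can be made independent of the mollification variables and Fubini can pull the convolution through the expectation. Once this is in place, the remaining ingredients---Fubini, the time-domain bookkeeping, and the routine mollifier estimates for \eqref{eq:uh-uhe}--\eqref{eq:deriv}---are straightforward.
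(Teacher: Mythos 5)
Your proof is correct and follows essentially the same route as the paper: the mollifier estimates for \eqref{eq:uh-uhe}--\eqref{eq:deriv} come straight from Proposition~\ref{prop:vhe} and the scaling in \eqref{eq:deriv_coeff}, and your scale-matching argument---applying \eqref{eq:dpp_vhe} at $(t-s,x-y)$ with the shake $(\eta,\xi)=(s,y)$ so that the shifted process $\hat X_r+y$ has a law independent of $(s,y)$, then pulling the convolution through the expectation by Fubini---is precisely the ``$\int\sup\geq\sup\int$'' step the paper attributes to Krylov (bottom of page~9 of \cite{Kry99}). The small imprecisions you note (double mollification of $f$, the exact power in \eqref{eq:deriv}, the sign convention on the time shake) are the same loose ends present in the paper's own terse proof and do not affect the argument.
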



\begin{proof}
The first part follows
from Proposition \ref{prop:vhe} and
\eqref{eq:deriv_coeff},
while \eqref{eq:DPP_uhe} follows by the definitions of $\uhe$, $\hat
X_t$, $\tilde X_t$, and
the inequality $\int \sup (\cdots)\geq \sup\int(\cdots)$. See
\cite[bottom of page 9]{Kry99} for more details. Here $\alpha_t \equiv a$ constant over 
$t\in [0,h]$ by a slight abuse of notation. 
\end{proof}

\subsection{Proof of  Theorem \ref{theo:main}}\
\label{subsec:proof}

\medskip
\noindent 1)\quad {\em Upper bound on $\Lae \uhe+\fae$.}
By  two applications of the {It\^o} (or Dynkin) formula,
\begin{align*}
\Easx[\uhe(s+\h,\tilde X_h)]
&=  \uhe(s,x)+ \Easx \Big[ \int_0^h (\Lae \uhe)(s+t,\tilde X_t) \dt\Big]\\
&= \uhe(s,x)+\h (\Lae \uhe)(s,x) + \Easx \Big[ \twoint \Lae (\Lae
  \uhe)(s+r,\tilde X_r) \dr \dt\Big]
\end{align*}
for $s \le T-\h$, $x\in \mathbb{R}^{{d}}$, $a\in A$, where
the generator $\La$ of the diffusion process is defined as
\[
\La := \partial_t + b_a^T D_x + \frac{1}{2}tr[\sigma_a \sigma_a^T D_x^2].
\]
Inserting this equality into the dynamic programming inequality
\eqref{eq:DPP_uhe} in Proposition \ref{pro:supdpp}, applying It{\^o}
once to the $\fae$-term, and  
dividing by $\h$, we find that
\begin{align}
 (\Lae \uhe)(s,x) + \fae(s,x)
\le 
\frac1\h \sup_{a\in A} \Big(\|\Lae \fae\|_\infty + \|\Lae \Lae
\uhe\|_\infty\Big) \twoint \dr \dt.
\label{Linequ}
\end{align}
Since the leading term $\Lae \Lae \uhe$ is a sum of terms of the form $\phi_1 (\partial_t^m \phi_2) (D_x^k \phi_3)$ with
$\phi_i \in \{\mu, \sigma \sigma^T, \uhe\}$ and $2m+k\le 4$, by
(\ref{eq:deriv_coeff}) and (\ref{eq:deriv}),
\begin{align}
(\Lae \uhe)(s,x)+\fae(s,x) \leq C \vare^{-3}\h.
\label{Lainequ}
\end{align}
\smallskip

\noindent 2)\quad {\em Upper bound on $\tilde v-\vh$ for $s\in[0,T-h)$.}
Let $\alpha \in \mathcal A$, $s\in [0,T-\h]$, and $x\in\R^d$. By
It{\^o}'s formula and part 1), 
\begin{align*}
\Ealsx[ \uhe(T-h,\tilde X_{T-h-s})] & \le \uhe(s,x)+\Ealsx\Big[ \int_0^{T-s}(\Lat
  \uhe)(s+t,\tilde X_t) \dt \Big]\\
&\le \uhe(s,x)- \Ealsx\Big[ \int_0^{T-s} \fat(s+t,\tilde X_t) \dt \Big]+ TC
\vare^{-3} {\h}.
\end{align*}
From \eqref{eq:uh-uhe} in Proposition \ref{pro:supdpp} and the
first part of Proposition \ref{prop:vhe}, it then follows that
\begin{align*}
\Ealsx[ \uh(T-h,\tilde X_{T-h-s})]  &\le \uh(s,x)- \Ealsx\Big[ \int_0^{T-s}
  \fat(s+t,\tilde X_t) \dt \Big] + C (\vare + \vare^{-3} {\h})\\
&\le \vh(s,x)- \Ealsx\Big[ \int_0^{T-s} \fat(s+t,\tilde X_t) \dt \Big] + C (\vare + \vare^{-3} {\h}),
\end{align*}
for a generic constant $C$. 
Since by definition \eqref{eq:veps} and {the} regularity of $\uh$
(Proposition \ref{prop:vhe}), 
\begin{align*}
\Ealsx [(\uh(T-h,\tilde X_{T-h-s})] & =  
\Ealsx[ \uh(T-h,\tilde X_{T-h-s}) - \uh(S,\tilde X_{T-s})+g(\tilde X_{T-s}))]\\
& \ge \Ealsx [g(\tilde X_{T-s}))]- C (\h^{1/2} + \vare),
\end{align*}
we conclude that 
\begin{eqnarray*}
\tilde J^\alpha(s,x)=\Ealsx \Big[\int_0^{T-s} \fat(s+t,\tilde X_t) \dt + g(\tilde X_{T-s}) \Big] \le \vh(s,x) + C (\vare + \h^{1/2} + \vare^{-3} {\h}).
\end{eqnarray*}
Since $\alpha\in\mathcal A$ was arbitrary, by the definition
 of $\tilde v$ (see just before Proposition \ref{tildev}),
\[
\tilde v(s,x) - \vh(s,x) \le C (\vare + \h^{1/2} + \vare^{-3} {\h}).
\]
\smallskip

\noindent 3)\quad {\em Upper bound on $\tilde v-\vh$ for $s\in[T-h,T]$.}
  By the definition of $\tilde J^\alpha$ (see just before Proposition \ref{tildev}), It\^o's
  formula, the regularity of $f$ and
$g$, and using \eqref{eq:deriv_coeff}, there is a constant $C>0$ such that for every
$\alpha\in\mathcal A$ and $s\in[T-h,T]$,
$$
|\tilde J^\alpha(s,x) -g(x)| = 
\Big| \Ealsx\Big[ \int^{T-s}_0
   \Big(f_{\alpha_r}(s+r,\tilde X_r)+L_{\alpha_r} g(\tilde X_{r}) \Big)\,\mathrm{d}
   r\Big]\Big|\leq C(1+\vare^{-1})h.
$$
Then it follows from the definitions of $\tilde v$ and $\vh$ that
$$|\tilde v(s,x)-g(x)|+|\vh(s,x)-g(x)|\leq C\vare^{-1}h,
$$
and hence also $|\tilde v(s,x)-\vh(s,x)|\leq 2C\vare^{-1}h$ for $s\in[T-h,T]$.

\medskip
\noindent 4)\quad {\em Conclusion:} Using Proposition
\ref{tildev} and parts 2) and 3), we have that 
$$v(s,x)-\vh(s,x)\leq \tilde v(s,x)-\vh(s,x)+ C\vare \leq  C (\vare + \h^{1/2} + \vare^{-3} {\h})$$ for
$s\in[s,T]$ and $x\in\R^d$. Taking $\vare=\h^{1/4}$ then concludes the
proof of the right-hand inequality in \eqref{eqn:main}. The left-hand inequality is immediate since 
$\mathcal{A}_h \subseteq \mathcal{A}$.

\subsection{The maximal rate and comparison with \cite{Kry99}}
\label{subsec:regular}

If the data and value functions are smooth enough,
we {can adapt the proof of Theorem \ref{theo:main}} to obtain the maximal rate of the approximation, which is 1.
{More specifically,} {if we assume $v_h$ {and $f$} sufficiently smooth, we have  {in (\ref{Linequ})} $\sup_{a\in A}(\|\La(\La\uhe)\|_\infty+\|\La
f\|_\infty) {\le C} <\infty$ {with $C$ independent} of $\vare$. Therefore, instead of (\ref{Lainequ})}, the conclusion of step 1) in the previous proof gives
$$(\Lae \uhe)(s,x)+\fae(s,x) \leq C \h,$$
for some constant $C$ independent of $a\in A$ and $\vare$.
Moreover, if we assume that $b$, $\sigma$ and $f$ are
Lipschitz in $t$ uniformly in $x$ and $a$, and $g$ belongs to $C_b^2 (\R^d)$, then by standard
results $u_h$ will be Lipschitz in $t$. Hence, we find in step 2) that
\[
\tilde v(s,x) - \vh(s,x) \le C (\vare +  {\h}).
\]
Sending $\vare$ to zero then gives that $\tilde v$ converges to $v$, and
we have the following result:

\begin{prop}
Additionally to assumptions (H1)-(H3), let $b, \sigma$ and $f$ be Lipschitz continuous in $t$ uniformly with respect to $x$ and $a$, and $g\in C^2_b(\R^n)$. If $\sup_{a\in A}(\|\La(\La\vh)\|_\infty+\|\La
f\|_\infty) <\infty$, then there exists $C>0$ such that for any
  $s \in [0,T]$, $x\in \R^d$, and $\h >0$, we have 
\begin{eqnarray}
\label{eqn:main2}
0 \ \leq\ v(s,x)-\vh(s,x) \ \leq \ C \h.
\end{eqnarray}
  \end{prop}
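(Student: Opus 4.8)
The plan is to re-run the proof of Theorem~\ref{theo:main} with the \emph{same} three regularisation steps (Propositions~\ref{tildev}, \ref{prop:vhe}, \ref{pro:supdpp}), but to exploit the fact that, under the additional regularity, the truncation error produced in step~1) no longer deteriorates as $\vare\to0$. The pay-off is that in the final estimate we are not forced to balance the two sides by the choice $\vare=\h^{1/4}$; instead we are free to let $\vare\to0$, which kills the $\vare$-contribution and leaves the order-$\h$ bound \eqref{eqn:main2}.

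In step~1), the only place where a negative power of $\vare$ appeared was the passage from \eqref{Linequ} to \eqref{Lainequ}, where the mollifier bounds \eqref{eq:deriv_coeff}--\eqref{eq:deriv} were used to estimate $\|\Lae\Lae\uhe\|_\infty$ and $\|\Lae\fae\|_\infty$. Under the present hypotheses one keeps \eqref{Linequ} as it stands and instead observes that $\sup_{a\in A}\big(\|\La(\La\uhe)\|_\infty+\|\La\fae\|_\infty\big)$ is bounded \emph{uniformly in $\vare$}: this follows from the assumption $\sup_{a\in A}\big(\|\La(\La\vh)\|_\infty+\|\La f\|_\infty\big)<\infty$ together with the fact that mollifying sufficiently smooth data at scale $\vare$ leaves all the derivatives occurring in $\La\La\uhe$ and $\La\fae$ bounded independently of $\vare$. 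Hence step~1) now gives
\[
(\Lae\uhe)(s,x)+\fae(s,x)\ \le\ C\h
\]
in place of \eqref{Lainequ}, with $C$ independent of $a$ and of $\vare$.

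Steps~2) and~3) then go through verbatim, with two changes near the terminal time. First, in step~2) the term $\Ealsx\big[g(\tilde X_{T-\h-s})-g(\tilde X_{T-s})\big]$, previously bounded only by $O(\h^{1/2})$ using the Lipschitz continuity of $g$, is now expanded by It\^o's formula and bounded by $\|\La g\|_\infty\,\h\le C\h$, since $g\in C^2_b(\R^d)$; and the remaining term $\uh(T-\h,\cdot)-\uh(S,\cdot)$ is $O(\h+\vare^2)$ by the \emph{Lipschitz} continuity of $\uh$ in $t$, which holds here by standard regularity results for $v_h$ under the assumptions that $b,\sigma,f$ are Lipschitz in $t$ uniformly in $(x,a)$ and $g\in C^2_b$. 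Combining with \eqref{eq:uh-uhe} and Proposition~\ref{prop:vhe} yields $\tilde v(s,x)-\vh(s,x)\le C(\vare+\h)$ for $s\in[0,T-\h)$. Second, in step~3) the estimate there becomes $|\tilde J^\alpha(s,x)-g(x)|\le(\|f\|_\infty+\|\La g\|_\infty)\h\le C\h$ since $g\in C^2_b$, so that $|\tilde v(s,x)-\vh(s,x)|\le C\h$ for $s\in[T-\h,T]$; all constants are independent of $\vare$.

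Finally, combining steps~2)--3) with Proposition~\ref{tildev} gives $v(s,x)-\vh(s,x)\le\big(\tilde v(s,x)-\vh(s,x)\big)+C\vare\le C(\vare+\h)$ for all $\vare\in(0,1]$; since $\vh$ does not depend on $\vare$, letting $\vare\to0$ proves the right-hand inequality in \eqref{eqn:main2}, and the left-hand one is immediate from $\cA_\h\subseteq\cA$. I expect the main obstacle to be the $\vare$-uniform control of the truncation term $\|\Lae\Lae\uhe\|_\infty$: one has to check that, once $\vh$, $f$ (and, implicitly, $b,\sigma$) are regular enough, no derivative of a mollified quantity gets multiplied by a negative power of $\vare$ — equivalently, that the leading term of $\Lae\Lae\uhe$ converges, as $\vare\to0$, to $\La\La\vh$, which is finite by hypothesis.
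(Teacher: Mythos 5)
Your proof follows the paper's argument essentially verbatim: both replace the $\vare$-dependent truncation bound in step~1) by the $\vare$-uniform bound $C\h$ using the hypothesis on $\La(\La\vh)$ and $\La f$, both exploit the time-Lipschitz continuity of $\uh$ and $g\in C^2_b$ to upgrade the terminal-time error from $O(\h^{1/2}+\vare^{-1}\h)$ to $O(\h)$, and both conclude by sending $\vare\to0$ in $\tilde v-\vh\le C(\vare+\h)$. The only difference is that you spell out the step~3) terminal estimate explicitly, and you correctly flag (as the paper implicitly does) that the passage from the hypothesis on $\La\La\vh$ to an $\vare$-uniform bound on $\La\La\uhe$ is the delicate point, handled informally in both treatments.
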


This is the maximal rate that this approximation can reach. The reason
is that the order obtained by applying It\^o twice in step 1)
of the proof cannot be improved. This can easily be checked by repeatedly
applying It\^o  to obtain higher order error expansions and then noting that all such
expansions contain terms of order $h$.

Step 1) of the proof also explains why Krylov in \cite{Kry99} got a
 less sharp result than ours. After one application of It\^o, he used the 
moment bound $\E[|x-X_r|]\leq \sqrt{\E[|x-X_r|^2]}\leq C\sqrt r$ to
get 
\begin{align*}
\Big|\frac1h\Easx \Big[ \int_0^h (\Lae \uhe)(s+t,\tilde X_t) \dt\Big]
- (\Lae \uhe)(s,x)\Big| \leq {C} \|D_x (\Lae \uhe)\|_\infty h^{1/2}  {+ \|  \partial_t  (\Lae \uhe)\|_\infty h}.
\end{align*}
This estimate requires only {three derivatives in space} of $\uhe$ but gives the
lower rate 1/2. The conclusion of step 1) of the proof then becomes
\begin{eqnarray*}
\La \uhe(s,x) + \fa(s,x) \le C \left(\vare^{-2} h^{1/2}  + \vare^{-3} h  \right).
\end{eqnarray*}
Completing the proof as in Section \ref{subsec:proof} then gives
$$\tilde v(s,x) - \vh(s,x) \le C (\vare + \vare^{-2} h^{1/2} + \vare^{-3} h),$$
and optimizing with respect to $\vare$ shows that $ v(s,x) - \vh(s,x)
\le Ch^{1/6}$. Note that there is
no need for regularization of the coefficients and data since It\^o is
applied only once. In the case of smooth enough solutions, this approach
cannot give a higher rate than $1/2$. 


\section{Consequences on finite difference approximations}
\label{sec:fd}

In this section, we outline the impact of the improved error bound for the control approximation on the achievable convergence order for numerical schemes,
either by directly substituting the improved order {(Section \ref{subsec:impr1})} or by applying adaptations of the steps here using higher order estimates {(Section \ref{subsec:impr2})}.

\subsection{Improvement to Theorem 1.11 in \cite{krylov2000rate}}\label{subsec:impr1}


Using the new bound for the control approximation from Section \ref{sec:main}, one easily obtains a sharpening of the order from $1/39$ in \cite[Theorem 1.11]{krylov2000rate} and $1/21$ in \cite[Theorem 5.4]{Kry99} to $1/15$,
which holds for local, monotone schemes of consistency order $1/2$.
Indeed, using Theorem \ref{theo:main} instead of \cite[Theorem 2.3]{Kry99}, the bound in the second inequality in the proof of \cite[Theorem 5.4]{Kry99}
(on top of page 14 in \cite{Kry99})
becomes 
$$
v\leq  v_{\delta,1/n} + C(n \delta^{1/3} + n^{-1/4}),
$$
where $\delta>0$ is the {time} discretization step used in \cite{Kry99} for the approximation {scheme for} the value function,  $n$ the number of
{time intervals over which the policy is constant} and $v_{\delta,1/n}$ is the obtained approximation of $v$.\footnote{
Note that in Section 5 of \cite{Kry99}, our $\delta$ above is denoted by $h^2$. We introduce $\delta$ to avoid ambiguity with the parameter $h$ used in the previous sections of this paper (corresponding to $h = 1/n$ in the present section).
}
Optimizing with respect to $\delta$ gives $n \sim \delta^{-4/15}$ and
 an estimate of 
 order $1/15$ in $\delta$.

Assuming order 1 consistency {of the scheme used} instead of order 1/2 as in \cite[Theorem 1.11]{krylov2000rate} and \cite[Theorem 5.4]{Kry99}, in conjunction with \cite[Lemma 3.2]{krylov2000rate},
one gets 
$$
v\leq  v_{\delta,1/n} + C(n \delta^{1/2} + n^{-1/4}),
$$
and the rate improves further to $1/10$.

\subsection{Improvement to Theorem 5.7 in \cite{Kry99}}\label{subsec:impr2}

For a wide class of numerical schemes, similar modifications as those used to prove Theorem \ref{theo:main} can be performed to improve the error estimates  given in  \cite[Theorem 5.7]{Kry99}. 
Following as much as possible the notation in \cite{Kry99}, let us define for any $s\geq 0$, $x\in\R^d$, $a\in A$ the random variable
$$
Y^{a,s,x}:= x + b(s,x,a) h + \sigma(s,x,a)  \zeta,
$$
where $\zeta$ is an $\R^p$-valued random variable  such that 
\be\label{eq:moments}
\E[\zeta] = 0, \quad 
|\E[\zeta_i \zeta_j] - h \delta_{ij}| \le C h^2
\quad\text{and}\quad \E[\zeta^k] \leq C h^2 \text{ for any $k\geq 3$}.
\ee
It is easy to check, by Taylor expansion, that for any smooth function $\phi$ the estimate in \cite[Lemma 5.10]{Kry99} for the truncation error of the generator becomes
$$
\left| \La  \phi (s,x) - h^{-1} \E \left[ \phi(s+h,Y^{a,s,x}) - \phi(s,x)\right]\right| \leq C h
$$
for a constant $C$ depending only on $C_1$ and $C_2$ in assumptions (H2)--(H3) and the bounds on the derivatives $\partial^m_t D^k_x \phi$ for $2m +k\leq 4$.

Observe that conditions \eqref{eq:moments} are slightly stronger than (5.4) in \cite{Kry99}, who only assume accuracy of the moments to order $h^{{3/2}}$ instead of $h^2$
in  \eqref{eq:moments}, so that only order $1/2$ consistency results instead of order 1 above.
However, the higher order assumptions are satisfied by very common schemes such as the classical semi-Lagrangian scheme \cite{CF95, DJ12} corresponding to the choice
\be\label{eq:sl}
\mathbb P(\zeta_i = \pm h^{1/2} ) = 1/2\qquad \text{for $i=1,\ldots,p$}.
\ee

The scheme considered in \cite{Kry99} is then recursively defined, for any $x\in \R^d$, by 
\begin{align*}
& \hat v_h (s,x) = g(x)  & & \text{if}\quad s\in (T-h,T], \\
& \hat v_h (s,x) = \underset{a\in A}\sup\; \left\{ \fa(s,x)h + \E\left[ \hat v_h(s+h, Y^{a,s,x})\right] \right\} & & \text{if}\quad s\leq T-h. 
\end{align*}

Proceeding to a perturbation and regularization of $\hat v_h$ as in \cite{Kry99} (the notation follows the one in Section \ref{subsec:proof}, i.e. $\hat u_h^{(\vare)}$ is the mollification of $\hat u_h$, the solution of the scheme with perturbed ``shaken'' coefficients)  we get the inequality 
$$
\La \hat u^{(\vare)}_h  + \fa \leq C h \vare^{-3} 
$$
in $[0,T-h]\times \R^d$ for some constant $C$ depending only on $C_0, C_1$ in assumptions (H2) and (H3). Arguing as in  the proof of Theorem \ref{theo:main}, one obtains 
$$
\hat v_h\leq v + C h^{1/4}.
$$
Similarly, an upper bound of order $1/4$ for $v-\hat v_h$ can be obtained. This aligns the bounds for the scheme \eqref{eq:sl} with those obtained in \cite{DJ12} by PDE techniques. 

\section{Discussion and conclusions}

In this {short} paper, we show a convergence rate of $1/4$ for piecewise
constant control approximations to value functions of 
stochastic optimal control problems. This result is robust and holds for
degenerate problems with non-smooth, merely Lipschitz continuous value
functions. If the {data} and value function are smoother, we
show that the approximation has rate 1 and explain
why this is the maximal rate.  

Our rate 1/4 in \eqref{eqn:main} improves both the order 1/6 in
\cite{Kry99} and the rate 1/10 achieved in \cite{BJ07} by different (PDE)
techniques. We also carefully explain why we can improve the
result in \cite{Kry99}. {It is an interesting open question if the same rate could be obtained purely by
 PDE techniques.}




This work also opens up the possibility of improving the error
estimates for other approximation schemes as outlined in Section \ref{sec:fd}.
Moreover, it enables a purely probabilistic error analysis for
semi-Lagrangian schemes for HJB equations with results that are in line
with the best available results by PDE methods. We refer to \cite{PicaReisiErrors18} for the
details.

\bibliography{biblio.bib}
\bibliographystyle{plain}

\end{document}